
\documentclass[letterpaper, 10 pt, conference]{ieeeconf}  

\IEEEoverridecommandlockouts                              
\overrideIEEEmargins

\usepackage{amsmath}
\usepackage{amssymb}
\usepackage{amsfonts}

\usepackage[ruled,vlined,linesnumbered,inoutnumbered]{algorithm2e}

\usepackage{graphics,graphicx,epsf,epstopdf,subfigure}
\graphicspath{{./Figures/}}
\ifpdf
\DeclareGraphicsExtensions{.eps,.pdf,.png,.jpg}
\else
\DeclareGraphicsExtensions{.eps}
\fi

\usepackage[breaklinks,colorlinks,linkcolor=black,citecolor=black,urlcolor=black]{hyperref}

\DeclareMathOperator*{\argmin}{arg\,min}

\newcommand{\bR}{\mathbb{R}}

\newcommand{\cO}{\mathcal{O}}

\newcommand{\Rnp}{\mathbb{R}^{n \times p}}

\newcommand{\Rnm}{\mathbb{R}^{n \times m}}

\newcommand{\zz}{^{\top}}
\newcommand{\ff}{_{\mathrm{F}}}
\newcommand{\fs}{^2_{\mathrm{F}}}

\newcommand{\dist}{\mathrm{dist}}

\newcommand{\tr}{\mathrm{tr}}
 
\newcommand{\sym}{\mathrm{sym}}
\newcommand{\proj}{\mathrm{proj}}
\newcommand{\env}{\mathrm{env}}
\newcommand{\prox}{\mathrm{prox}}

\newcommand{\bfone}{\mathbf{1}}

\newcommand{\Snp}{\mathcal{S}_{n,p}}

\newcommand{\dkh}[1]{\left(#1\right)}
\newcommand{\hkh}[1]{\left\{#1\right\}}

\newcommand{\jkh}[1]{\left\langle#1\right\rangle}
\newcommand{\norm}[1]{\left\|#1\right\|}
\newcommand{\abs}[1]{\left\lvert #1\right\rvert}

\newcommand{\iid}{i \in [d]}

\newcommand{\sumiid}{\sum_{i=1}^d}

\newcommand{\sumjjd}{\sum_{j=1}^d}

\newcommand{\Xik}{X_{i}^{(k)}}
\newcommand{\Xjk}{X_{j}^{(k)}}
\newcommand{\Xikn}{X_{i}^{(k + 1)}}

\newcommand{\Dik}{D_{i}^{(k)}}
\newcommand{\Djk}{D_{j}^{(k)}}
\newcommand{\Dikn}{D_{i}^{(k + 1)}}

\newcommand{\Hik}{H_{i}^{(k)}}

\newcommand{\Hikn}{H_{i}^{(k + 1)}}

\newtheorem{theorem}{Theorem}

\newtheorem{lemma}[theorem]{Lemma}
\newtheorem{proposition}[theorem]{Proposition}
\newtheorem{definition}{Definition}

\newtheorem{assumption}{Assumption}
\newtheorem{condition}{Condition}


\title{\LARGE \bf
Smoothing Gradient Tracking 
for Decentralized Optimization over the Stiefel Manifold 
with Non-smooth Regularizers
}


\author{Lei Wang and Xin Liu
\thanks{The first author is supported by 
	the National Key R\&D Program of China (No. 2020YFA0711900, 2020YFA0711904).
	The second author is supported in part by 
	the National Natural Science Foundation of China 
	(No. 12125108, 11971466, 12288201, 12021001, 11991021)
	and Key Research Program of Frontier Sciences, 
	Chinese Academy of Sciences (No. ZDBS-LY-7022).}
\thanks{L. Wang and X. Liu are both with 
	State Key Laboratory of Scientific and Engineering Computing,
	Academy of Mathematics and Systems Science, 
	Chinese Academy of Sciences, Beijing, China,
	and University of Chinese Academy of Sciences, Beijing, China.
    {\tt\small wlkings@lsec.cc.ac.cn, liuxin@lsec.cc.ac.cn}}%
}

\begin{document}

\maketitle
\thispagestyle{empty}
\pagestyle{empty}

\begin{abstract}

Recently, decentralized optimization over the Stiefel manifold
has attacked tremendous attentions 
due to its wide range of applications in various fields.
Existing methods rely on the gradients to update variables,
which are not applicable to the objective functions
with non-smooth regularizers,
such as sparse PCA.
In this paper, to the best of our knowledge,
we propose the first decentralized algorithm 
for non-smooth optimization over Stiefel manifolds.
Our algorithm approximates the non-smooth part of objective function 
by its Moreau envelope,
and then existing algorithms for smooth optimization can be deployed.
We establish the convergence guarantee 
with the iteration complexity of $\cO (\epsilon^{-4})$.
Numerical experiments conducted under the decentralized setting
demonstrate the effectiveness and efficiency of our algorithm.

\end{abstract}

\section{INTRODUCTION}

Given a set of $d$ agents connected by a communication network,
we focus on the optimization problem
over the Stiefel manifold $\Snp := \{X \in \Rnp \mid X\zz X = I_p\}$
with non-smooth regularizers of the following form:
\begin{equation}\label{opt:stiefel}
	\begin{aligned}
		\min\limits_{X \in \Snp} \hspace{2mm} 
		& \sumiid (f_i(X) + g_i (X)), \\
	\end{aligned}
\end{equation}
where $f_i: \Rnp \to \bR$ and $g_i: \Rnp \to \bR$ are two local functions
privately owned by agent $i \in [d] := \{1, \dotsc, d\}$,
and $I_p$ denotes the $p \times p$ identity matrix with $p \leq n$.
We consider the scenario that 
the agents can only exchange information 
with their immediate neighbors through the network,
which can be modeled as a connected undirected graph.
Under this decentralized setting,
there is not a center to aggregate the local information 
and coordinate the optimization process.
Consequently, each agent has to maintain a local variable $X_i$ 
as a copy of the common variable $X$.
The goal of decentralized optimization is 
to seek a global consensus
such that each local variable is a solution to problem \eqref{opt:stiefel}
through local communication.

Throughout this paper, we make the following assumptions 
about problem \eqref{opt:stiefel}.

\begin{assumption}\label{asp:objective}
	
	The functions $f_i$ and $g_i$ satisfy the following conditions for any $\iid$.
	
	\begin{enumerate}
		
		\item $f_i$ is first-order differentiable 
		and its Euclidean gradient $\nabla f_i$ 
		is Lipschitz continuous over $\Snp$
		with the corresponding Lipschitz constant $L_{f_i} \geq 0$.

		\item $g_i$ is convex and Lipschitz continuous
		with the corresponding Lipschitz constant $L_{g_i} \geq 0$.
		
	\end{enumerate}
	
\end{assumption}
For convenience, we denote $L_f := \max_{\iid} L_{f_i}$
and $L_g := \max_{\iid} L_{g_i}$.


By virtue of its versatility, problem \eqref{opt:stiefel} arises naturally 
in many scientific and engineering applications,
such as sparse principal component analysis (PCA) \cite{Jolliffe2003,Wang2021communication},
deep neural networks with orthogonality constraints \cite{Arjovsky2016,Huang2018},
dual principal component analysis \cite{Tsakiris2018dual,Zhu2018dual},
and dictionary learning \cite{Zhu2019linearly,Lu2022decentralized}.
However, under the decentralized setting,
it is quite changing to solve problem~\eqref{opt:stiefel}.
The difficulty lies primarily in the non-smoothness of objective function
and the non-convexity of manifold constraint.


\subsection{Related Works}
\label{subsec:review}

Recent years have seen the extensive development of 
decentralized optimization over Stiefel manifolds.
Existing algorithms can be divided into two categories.
The first category leverages the geometric tools from Riemannian optimization \cite{Absil2008}
to solve this problem,
including DRGTA \cite{Chen2021decentralized}
and DRNGD \cite{Hu2023decentralized}.
These algorithms directly seek a consensus on Stiefel manifolds \cite{Chen2021local},
which require multiple rounds of communications to guarantee the convergence.
As a result, this communication bottleneck hinders the scalability in large-scale networks.
The second category, built on a different framework,
constructs exact penalty models for optimization over Stiefel manifolds,
which are then solved by unconstrained decentralized algorithms.
Therefore, this category attempts to reach a consensus
in the ambient Euclidean space alternatively.
Two members of this category are DESTINY \cite{Wang2022decentralized}
and VRSGT \cite{Wang2023variance}.
These algorithms only invoke a single round of communications per iteration,
which can provide a high degree of communication-efficiency in general.

We emphasize that the above mentioned methods are tailored for 
smooth optimization problems over Stiefel manifolds,
since the gradients of objective function are computed per iteration.
To the best of our knowledge, there is no decentralized algorithm 
that can solve the non-smooth problem \eqref{opt:stiefel}.

It is worthy of mentioning that smoothing methods have been introduced in 
Riemannian optimization to solve the non-smooth problems.
For example, \cite{Zhang2021riemannian} extends the smoothing steepest descent method 
from Euclidean spaces to Riemannian manifolds.
Moreover, \cite{Peng2022riemannian} and  \cite{Zhu2023smoothing} 
propose a family of Riemannian gradient type methods 
based on the smooth approximation of objective functions.
Generally speaking, these algorithms require some global information
that is not available under the decentralized setting.
In addition, a Riemannian ADMM algorithm 
is developed in \cite{Li2022riemannian}
to solve the smoothed problem
with a favorable numerical performance.
The convergence is not guaranteed 
with the additional consensus constraint
under the decentralized setting.
In summary, the above-mentioned algorithms are 
tailored for centralized optimization problems,
which can not be straightforwardly extended to the decentralized setting.


\subsection{Contributions}

In this paper, we propose the first decentralized algorithm
for the optimization problem \eqref{opt:stiefel}
over the Stiefel manifold with non-smooth regularizers. 
The smoothing technique tides us over the obstacle 
to handling the combination of non-smoothness and non-convexity.
Our algorithm attempts to solve the smoothed proxy of problem \eqref{opt:stiefel},
where the non-smooth regularizers are replaced by their Moreau envelopes.
Even under the centralized setting, 
our algorithm provides a novel alternative for 
the non-smooth optimization problem over the Stiefel manifold.

We establish the global convergence of our algorithm 
to a first-order $\epsilon$-stationary point in $\cO (\epsilon^{-4})$ iterations.
Such theoretical guarantee matches the complexities of centralized approaches 
to non-smooth optimization over Stiefel manifolds,
such as Riemannian ADMM algorithm \cite{Li2022riemannian}
and Riemannian subgradient-type method \cite{Li2021weakly}.
Preliminary numerical experiments validate the effectiveness of our smoothing technique.
Moreover, our algorithm has a promising performance in sparse PCA problems.

\subsection{Notations}

The Euclidean inner product of two matrices $Y_1, Y_2$ 
with the same size is defined as $\jkh{Y_1, Y_2}=\tr(Y_1\zz Y_2)$,
where $\tr (B)$ stands for the trace of a square matrix $B$.
And the notation $\sym (B) = (B + B\zz) / 2$ represents 
the symmetric part of $B$.
The Frobenius norm and 2-norm of a given matrix $C$ 
are denoted by $\norm{C}\ff$ and $\norm{C}_2$, respectively. 
The $(i, j)$-th entry of a matrix $C$ is represented by $C (i, j)$.
Given a differentiable function $f (X) : \Rnp \to \bR$, 
the Euclidean gradient of $f$ with respect to $X$ is represented by $\nabla f (X)$.

\section{PRELIMINARIES}

This section introduces several preliminaries of our algorithm.

\subsection{Stationarity Condition}


We first introduce the definition of Clarke subgradient \cite{Clarke1990} 
for non-smooth functions.

\begin{definition}
	Suppose $f: \Rnp \to \bR$ is a Lipschitz continuous function.
	The generalized directional derivative of $f$ at the point $X \in \Rnp$
	along the direction $H \in  \Rnp$ is defined by:
	\begin{equation*}
		f^{\circ} (X; H) := \limsup\limits_{Y \to X,\, t \to 0^+} \dfrac{f (Y + t H) - f(Y)}{t}.
	\end{equation*}
	Based on generalized directional derivative of $f$,
	the (Clark) subgradient of $f$ is defined by:
	\begin{equation*}
		\partial f(X) := \{G \in \Rnp \mid \jkh{G, H} \leq f^{\circ} (X; H) \}.
	\end{equation*}
\end{definition}

As discussed in \cite{Yang2014} and \cite{Chen2020},
the first-order stationarity condition of \eqref{opt:stiefel} can be stated as follows.

\begin{definition} 
	\label{def:stationary}
	A point $X \in \Rnp$ is called a first-order stationary point 
	of \eqref{opt:stiefel} if it satisfies the following conditions.
	\begin{equation*} 
		\left\{
		\begin{aligned}
			& 0 \in \proj_{X} \dkh{ \sumiid \dkh{\nabla f_i (X) + \partial g_i (X)} }, \\
			& X\zz X = I_p,
		\end{aligned}
		\right.
	\end{equation*}
	where $\proj_{X} (Y) := Y - X \sym (X\zz Y)$.
\end{definition}


For a point $X \in \Snp$, $\proj_{X} (\cdot)$ is nothing but the orthogonal projection 
onto the tangent space of $\Snp$ \cite{Absil2008}.
Based on Definition \ref{def:stationary}, 
we define the following notion of 
first-order $\epsilon$-stationary point.

\begin{definition} 
	\label{def:epsilon-stationary}
	A point $X \in \Rnp$ is called a first-order 
	$\epsilon$-stationary point of \eqref{opt:stiefel} 
	if there exists $\{Y_i \in \Rnp\}_{i = 1}^d$ such that the following conditions hold.
	\begin{equation*}
		\left\{
		\begin{aligned}
			& \dist\dkh{0, \proj_{X} \dkh{ 
					\sumiid \dkh{\nabla f_i (X) + \partial g_i (Y_i)} } 
			} \leq \epsilon, \\
			& \norm{X - Y_i}\ff \leq \epsilon, \quad \iid, \\
			& \norm{X\zz X - I_p}\ff \leq \epsilon.
		\end{aligned}
		\right.
	\end{equation*}
\end{definition}

One can readily check that a first-order $\epsilon$-stationary point 
will reduce to a first-order stationary point if $\epsilon = 0$.

\subsection{Mixing Matrix}

In the context of decentralized optimization,
we usually associate the network with a mixing matrix 
denoted by $W = [W(i, j)] \in \bR^{d \times d}$
to conform to the underlying communication structure.

\begin{assumption}\label{asp:network}
	
	The mixing matrix $W \in \bR^{d \times d}$ satisfies the following conditions.
	
	\begin{enumerate}
		
		\item $W$ is symmetric.
		
		\item $W$ is doubly stochastic, namely, $W$ is nonnegative 
		and $W \mathbf{1}_d = W\zz \mathbf{1}_d = \mathbf{1}_d$,
		where $\bfone_d \in \bR^d$ stands for the $d$-dimensional vector of all ones.
		
		\item $W(i, j) = 0$ if $i$ and $j$ are not connected and $i \neq j$.
		
	\end{enumerate}
	
\end{assumption}
The mixing matrix $W$ in Assumption \ref{asp:network},
which is standard in the literature,
always exists and can be constructed efficiently via exchange of local degree information 
between the agents.
We refer interested readers to \cite{Yuan2016,Shi2015,Nedic2018network} for more details.
According to the Perron-Frobenius Theorem \cite{Pillai2005perron},
we know that the eigenvalues of $W$ lie in $[-1, 1]$ and
\begin{equation*}
	\lambda := \norm{W - \bfone_d \bfone_d\zz / d}_2 < 1.
\end{equation*}
The parameter $\lambda$ measures the connectedness of networks.

\section{SMOOTHING TECHNIQUE}

Based on the smoothing technique,
we propose a novel decentralized algorithm 
to solve the optimization problem \eqref{opt:stiefel}
with non-smooth regularizers.

\subsection{Moreau Envelope}

Under the decentralized setting,
the combination of  non-smoothness and non-convexity
makes it intractable to tackle the problem \eqref{opt:stiefel}.
If there is only one of them, 
this problem is relatively easier to solve.
This motivates us to replace the non-smooth part of objective function 
by its Moreau envelope \cite{Moreau1965proximite,Rockafellar2009variational}
as a smooth approximation.
Then we can take advantage of existing algorithms for smooth problems 
to solve problem \eqref{opt:stiefel}.
This kind of algorithm is usually called smoothing algorithm \cite{Chen2012smoothing}.
The Moreau envelope and the closely related proximal operator are defined as follows.

%


\begin{definition} \label{def:moreau}
	For a proper, convex and lower semi-continuous function $g: \Rnp \to \bR$, 
	the Moreau envelope of $g$ with the smoothing parameter $\sigma > 0$ is given by
	\begin{equation}
		\label{eq:env}
		\env_{\sigma, g} (X) := \min_{Y \in \Rnp} 
		\hkh{ g (Y) + \dfrac{1}{2 \sigma} \norm{Y - X}\fs}.
	\end{equation}
	And the proximal operator of $g$ is the global minimizer 
	of the above optimization problem, that is,
	\begin{equation}
		\label{eq:prox}
		\prox_{\sigma, g} (X) 
		:= \argmin_{Y \in \Rnp} 
		\hkh{ g (Y) + \dfrac{1}{2 \sigma} \norm{Y - X}\fs}.
	\end{equation}
\end{definition}

The following proposition indicates that the Moreau envelope $\env_{\sigma, g} (X)$
can be used to approximate the non-smooth function $g$,
and the approximation error is controlled by the smoothing parameter $\sigma$.

\begin{proposition}[\cite{Bohm2021variable}]
	\label{prop:moreau-approx}
	 Let $g: \Rnp \to \bR$ be a proper, convex and lower semi-continuous function.
	Suppose $g$ is Lipschitz continuous with the corresponding Lipschitz constant $L \geq 0$.
	Then for any $\sigma > 0$, it holds that
	\begin{equation*}
		\env_{\sigma, g} (X) \leq g (X) 
		\leq \env_{\sigma, g} (X) 
		+ \dfrac{1}{2} \sigma L^2.
	\end{equation*}
\end{proposition}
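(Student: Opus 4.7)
The plan is to prove the two inequalities separately. Both follow directly from the defining minimization \eqref{eq:env} combined with the Lipschitz hypothesis on $g$, so the main work is to set up a clean comparison between $g(X)$ and the value attained at the proximal point.

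First, I would establish the lower bound $\env_{\sigma, g}(X) \leq g(X)$ by exploiting the fact that the minimum in \eqref{eq:env} is taken over all $Y \in \Rnp$. Substituting the feasible candidate $Y = X$ gives $g(Y) + \frac{1}{2\sigma} \norm{Y - X}\fs = g(X)$, which bounds the Moreau envelope from above by $g(X)$ without any further work.

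For the upper bound $g(X) \leq \env_{\sigma, g}(X) + \frac{1}{2} \sigma L^2$, I would introduce $Y^* := \prox_{\sigma, g}(X)$, which exists since the objective in \eqref{eq:prox} is proper, convex, lower semi-continuous, and strongly coercive in $Y$ due to the quadratic term. By definition, $\env_{\sigma, g}(X) = g(Y^*) + \frac{1}{2\sigma} \norm{Y^* - X}\fs$. Applying the Lipschitz continuity of $g$ then yields $g(X) \leq g(Y^*) + L \norm{Y^* - X}\ff$, so it remains to control the cross term $L \norm{Y^* - X}\ff$ by $\frac{1}{2\sigma} \norm{Y^* - X}\fs + \frac{1}{2} \sigma L^2$. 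This is an elementary AM--GM estimate (equivalently, completing the square in the scalar quantity $\norm{Y^* - X}\ff$), after which the two preceding displays combine to give the claimed bound.

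The argument involves no real obstacle; the only point worth flagging is the implicit existence of the prox point $Y^*$, which the assumptions on $g$ guarantee. No Stiefel-specific structure and no properties of the network are used, which is appropriate since the proposition is a purely Euclidean fact about Moreau envelopes of Lipschitz convex functions.
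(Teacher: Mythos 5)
Your proof is correct. The paper does not prove Proposition~\ref{prop:moreau-approx} at all --- it simply cites \cite{Bohm2021variable} --- so there is no in-paper argument to compare against; your two steps (taking $Y = X$ as a feasible candidate in \eqref{eq:env} for the lower bound, and combining $g(X) \leq g(Y^*) + L\norm{Y^* - X}\ff$ with the completing-the-square estimate $L t - \tfrac{1}{2\sigma}t^2 \leq \tfrac{1}{2}\sigma L^2$ for the upper bound) are exactly the standard derivation and yield the stated constant. The remark about existence of $Y^* = \prox_{\sigma,g}(X)$ via strong convexity and coercivity of the objective in \eqref{eq:prox} is the right thing to flag and closes the only loose end.
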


Furthermore, the Moreau envelope $\env_{\sigma, g} (X)$ is a smooth function 
with the parameter $\sigma$ controlling the amount of smoothness.

\begin{proposition}[\cite{Bohm2021variable}]
	\label{prop:moreau-smooth}
	Let $g: \Rnp \to \bR$ be a proper, convex and lower semi-continuous function.
	Suppose $g$ is Lipschitz continuous with the corresponding Lipschitz constant $L \geq 0$.
	Then the Moreau envelope $\env_{\sigma, g} (X)$ is first-order continuously differentiable,
	and its Euclidean gradient has the following form:
	\begin{equation*}
		\nabla \env_{\sigma, g} (X)
		= \dfrac{1}{\sigma} (X - \prox_{\sigma, g} (X)).
	\end{equation*}
	Moreover, for any $X \in \Rnp$, we have 
	\begin{equation*}
		\norm{\nabla \env_{\sigma, g} (X)}\ff 
		\leq L.
	\end{equation*}
	Finally, $\nabla \env_{\sigma, g} (X)$ is Lipschitz continuous
	with the corresponding Lipschitz constant $1 / \sigma$.
\end{proposition}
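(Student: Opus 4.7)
The plan is to treat the four claims in sequence, all stemming from the strong convexity of the inner problem in \eqref{eq:env} together with classical properties of proximal mappings for convex functions.

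First, I would verify that $\prox_{\sigma, g}(X)$ is well-defined. Since $g$ is proper, convex and lower semi-continuous, the map $Y \mapsto g(Y) + \tfrac{1}{2\sigma}\|Y - X\|\fs$ is proper, lower semi-continuous, strongly convex with modulus $1/\sigma$, and coercive. Hence it admits a unique minimizer, so $\prox_{\sigma, g}(X)$ is single-valued. Differentiability of $\env_{\sigma, g}$ and the gradient formula then follow by a Danskin-type argument: writing $\phi(X, Y) := g(Y) + \tfrac{1}{2\sigma}\|Y - X\|\fs$, the partial gradient $\nabla_X \phi(X, Y) = \tfrac{1}{\sigma}(X - Y)$ is jointly continuous, and the unique minimizer $Y^\ast(X) = \prox_{\sigma, g}(X)$ depends continuously on $X$ (by strong convexity together with lower semi-continuity of $g$). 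These ingredients yield $\nabla \env_{\sigma, g}(X) = \nabla_X \phi(X, Y^\ast(X)) = \tfrac{1}{\sigma}(X - \prox_{\sigma, g}(X))$.

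Second, I would derive the bound $\|\nabla \env_{\sigma, g}(X)\|\ff \leq L$ from the first-order optimality condition of \eqref{eq:prox}. This condition reads $0 \in \partial g(\prox_{\sigma, g}(X)) + \tfrac{1}{\sigma}(\prox_{\sigma, g}(X) - X)$, so
\begin{equation*}
	\nabla \env_{\sigma, g}(X) = \dfrac{1}{\sigma}(X - \prox_{\sigma, g}(X)) \in \partial g(\prox_{\sigma, g}(X)).
\end{equation*}
Since $g$ is $L$-Lipschitz, every element of $\partial g(\cdot)$ has Frobenius norm at most $L$, giving the desired bound.

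Third, for the Lipschitz continuity of $\nabla \env_{\sigma, g}$, I would invoke firm non-expansiveness of the proximal operator of a convex function. Specifically, for any $X_1, X_2 \in \Rnp$, the monotonicity of $\partial g$ together with the optimality conditions at $\prox_{\sigma, g}(X_1)$ and $\prox_{\sigma, g}(X_2)$ yields
\begin{equation*}
	\jkh{\prox_{\sigma, g}(X_1) - \prox_{\sigma, g}(X_2),\, X_1 - X_2} \geq \norm{\prox_{\sigma, g}(X_1) - \prox_{\sigma, g}(X_2)}\fs,
\end{equation*}
from which it follows by Cauchy-Schwarz that $X - \prox_{\sigma, g}(X)$ is itself 1-Lipschitz (a short expansion of $\|(X_1 - \prox_{\sigma, g}(X_1)) - (X_2 - \prox_{\sigma, g}(X_2))\|\fs$ using the above inequality gives a bound by $\|X_1 - X_2\|\fs$). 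Dividing by $\sigma$ gives the Lipschitz constant $1/\sigma$ for $\nabla \env_{\sigma, g}$.

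The main obstacle, if any, is the rigorous justification of the Danskin-style differentiation in the first step: one must ensure that the minimizer map $X \mapsto \prox_{\sigma, g}(X)$ is continuous so that the envelope's one-sided directional derivatives coincide with $\nabla_X \phi(X, \prox_{\sigma, g}(X))$. Continuity, however, is an immediate consequence of the uniform strong convexity modulus $1/\sigma$ and closedness of $g$, so this step is standard. The remaining claims reduce to direct manipulations of the optimality condition and the firm non-expansiveness inequality.
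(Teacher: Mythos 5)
The paper does not prove this proposition at all: it is imported verbatim from the cited reference \cite{Bohm2021variable}, so there is no in-paper argument to compare against. Your proof is correct and is the standard one for these facts: well-posedness of the prox from strong convexity, the gradient formula via the optimality condition $\nabla \env_{\sigma, g}(X) = \tfrac{1}{\sigma}(X - \prox_{\sigma,g}(X)) \in \partial g(\prox_{\sigma,g}(X))$, the bound $\norm{\nabla \env_{\sigma,g}(X)}\ff \leq L$ from the equivalence of Lipschitz continuity with boundedness of subgradients, and the $1/\sigma$-Lipschitz gradient from firm non-expansiveness of the prox. One presentational remark: the continuity of $X \mapsto \prox_{\sigma,g}(X)$ that your Danskin step relies on is exactly the (firm) non-expansiveness you only establish in your third step, so in a written-out version you should derive that inequality first; this is an ordering issue, not a logical gap. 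If you want to avoid Danskin entirely, an equally standard route is to observe that $\env_{\sigma,g}$ is convex with $\tfrac{1}{\sigma}(X - \prox_{\sigma,g}(X))$ a subgradient at every $X$, and that a convex function whose subdifferential admits a continuous single-valued selection is continuously differentiable.
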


\subsection{Smoothed Problem}

Based on Proposition \ref{prop:moreau-approx} and Proposition \ref{prop:moreau-smooth},
the Moreau envelope offers a smooth approximation to non-smooth functions. 
By resorting to this powerful tool, 
we can obtain the following smoothed problem of \eqref{opt:stiefel}.
\begin{equation}\label{opt:stiefel-s}
	\begin{aligned}
		\min\limits_{X \in \Snp} \hspace{2mm} 
		& \sumiid h_i (X), \\
	\end{aligned}
\end{equation}
where $h_i (X) := f_i(X) + \env_{\sigma, g_i} (X)$
is a local function privately held by agent $i$.

According to the discussions in \cite{Wang2021multipliers},
a point $X \in \Rnp$
satisfies the first-order $\epsilon$-stationarity condition 
of problem \eqref{opt:stiefel-s} 
if and only if 
\begin{equation*}
	\left\{
	\begin{aligned}
		& \norm{\proj_{X} \dkh{ G (X) } 
		}\ff \leq \epsilon, \\
		& \norm{X\zz X - I_p}\ff \leq \epsilon,
	\end{aligned}
	\right.
\end{equation*}
where $G (X) = \sumiid G_i (X)$ with
\begin{equation*}
	G_i (X) := \nabla h_i (X) 
	= \nabla f_i (X) + \nabla \env_{\sigma, g_i} (X).
\end{equation*}
We have the following lemma.

\begin{lemma}
	\label{le:stationary}
	Suppose $X \in \Rnp$ is a first-order $\epsilon$-stationary point 
	of the smoothed problem \eqref{opt:stiefel-s} with
	\begin{equation*}
		0 < \sigma \leq \dfrac{\epsilon}{2 L_g}.
	\end{equation*}
	Then $X$ is also a first-order $\epsilon$-stationary point 
	of problem~\eqref{opt:stiefel}.
\end{lemma}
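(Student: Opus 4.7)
The plan is to exhibit the auxiliary points $\{Y_i\}_{i=1}^d$ required by Definition~\ref{def:epsilon-stationary} and then verify the three conditions one by one, using the smoothed $\epsilon$-stationarity of $X$ and the structural properties of the Moreau envelope listed in Proposition~\ref{prop:moreau-smooth}. The natural candidate is
\begin{equation*}
    Y_i := \prox_{\sigma, g_i}(X), \qquad \iid,
\end{equation*}
since this choice is exactly what makes the smoothed gradient match a Clarke subgradient of $g_i$ at $Y_i$.

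First I would verify the subgradient containment $\nabla \env_{\sigma, g_i}(X) \in \partial g_i(Y_i)$. This follows from the first-order optimality condition of the convex problem defining $\prox_{\sigma, g_i}(X)$ in \eqref{eq:prox}: it yields $0 \in \partial g_i(Y_i) + (Y_i - X)/\sigma$, and rearranging together with the gradient formula in Proposition~\ref{prop:moreau-smooth} gives the claim. Consequently,
\begin{equation*}
    G_i(X) = \nabla f_i(X) + \nabla \env_{\sigma, g_i}(X) \in \nabla f_i(X) + \partial g_i(Y_i),
\end{equation*}
so $G(X) = \sumiid G_i(X)$ is an element of $\sumiid (\nabla f_i(X) + \partial g_i(Y_i))$. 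Projecting onto the tangent space at $X$, the first $\epsilon$-stationarity condition for the smoothed problem, namely $\norm{\proj_X(G(X))}\ff \leq \epsilon$, immediately yields
\begin{equation*}
    \dist\dkh{0, \proj_X\dkh{\sumiid(\nabla f_i(X) + \partial g_i(Y_i))}} \leq \epsilon,
\end{equation*}
which is precisely the first condition in Definition~\ref{def:epsilon-stationary}.

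Next I would handle the proximity condition $\norm{X - Y_i}\ff \leq \epsilon$. Using the gradient formula again, $X - Y_i = \sigma \nabla \env_{\sigma, g_i}(X)$, so the bound $\norm{\nabla \env_{\sigma, g_i}(X)}\ff \leq L_{g_i} \leq L_g$ from Proposition~\ref{prop:moreau-smooth} combined with the hypothesis $\sigma \leq \epsilon/(2L_g)$ gives
\begin{equation*}
    \norm{X - Y_i}\ff \leq \sigma L_g \leq \epsilon/2 \leq \epsilon.
\end{equation*}
The feasibility residual $\norm{X\zz X - I_p}\ff \leq \epsilon$ is identical in both stationarity notions, so it is inherited from the smoothed problem. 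None of the steps should be genuinely hard; the only subtle point is recognizing that the optimality condition of $\prox_{\sigma, g_i}$ converts the Moreau gradient at $X$ into a Clarke subgradient evaluated at the shifted point $Y_i$, which is exactly the shift-of-base-point flexibility baked into Definition~\ref{def:epsilon-stationary}.
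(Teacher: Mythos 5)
Your proposal is correct and follows essentially the same route as the paper: the same choice $Y_i = \prox_{\sigma, g_i}(X)$, the same use of the prox optimality condition to place $\nabla \env_{\sigma, g_i}(X)$ in $\partial g_i(Y_i)$, and the same inheritance of the other two conditions. The only (immaterial) difference is in bounding $\norm{X - Y_i}\ff$: you invoke the gradient-norm bound $\norm{\nabla \env_{\sigma, g_i}(X)}\ff \leq L_{g_i}$ from Proposition~\ref{prop:moreau-smooth} to get $\sigma L_g \leq \epsilon/2$, whereas the paper re-derives the estimate from the prox definition and Lipschitz continuity of $g_i$, obtaining the slightly looser $2\sigma L_{g_i} \leq \epsilon$; both suffice.
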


\begin{proof}
	Let $Y_i = \prox_{\sigma, g_i} (X)$.
	Then it follows from the optimality condition of \eqref{eq:env} that
	\begin{equation*}
		0 \in \partial g_i (Y_i) + \dfrac{1}{\sigma} (Y_i - X),
	\end{equation*}
	which further yields that
	\begin{equation*}
		\nabla \env_{\sigma, g_i} (X)
		= \dfrac{1}{\sigma} (X - Y_i) 
		\in \partial g_i (Y_i).
	\end{equation*}
	Hence, we can obtain that
	\begin{equation*}
		\begin{aligned}
			& \dist\dkh{0, \proj_{X} \dkh{ 
					\sumiid \dkh{\nabla f_i (X) + \partial g_i (Y_i)} } 
			} \\
			& \leq \norm{\proj_{X} \dkh{ G (X) } 
			}\ff
			\leq \epsilon. \\
		\end{aligned}
	\end{equation*}
	In addition, according to the definition of proximal operator, 
	we have
	\begin{equation*}
		g_i (Y_i) + \dfrac{1}{2 \sigma} \norm{Y_i - X}\fs
		\leq g_i (X).
	\end{equation*}
	This together with the Lipschitz continuity of $g_i$ that
	\begin{equation*}
		\dfrac{1}{2 \sigma} \norm{X - Y_i}\fs
		\leq g_i (X) - g_i (Y)
		\leq L_{g_i} \norm{X - Y_i}\ff,
	\end{equation*}
	which implies that $\norm{X - Y_i}\ff 
	\leq 2 \sigma L_{g_i} \leq \epsilon$.
	According to Definition \ref{def:epsilon-stationary},
	we know that $X$ is a first-order $\epsilon$-stationary point
	of problem \eqref{opt:stiefel}.
	The proof is completed.
\end{proof}

Lemma \ref{le:stationary} guarantees that
one can always find an approximate first-order stationary point of \eqref{opt:stiefel}
by solving the smoothed problem \eqref{opt:stiefel-s}.

\subsection{Algorithm Development}

In this subsection, we intend to solve the smoothed problem \eqref{opt:stiefel-s}.
Among existing algorithms introduced in Subsection \ref{subsec:review}, 
DESTINY \cite{Wang2022decentralized} is chosen
due to its communication-efficiency.

Let $\Xik$ and $\Dik$ denote the $k$-th iterate of 
local variable and gradient tracker at agent $i$, respectively.
In our algorithm, the local variable is first updated
by performing a descent step along the direction of $\Dik$
and communicating with neighbors,
that is,
\begin{equation}
	\label{eq:update-x}
	\Xikn := \sumjjd W(i, j) \dkh{\Xjk - \eta \Djk},
\end{equation}
where $\eta > 0$ is the stepsize.
Then, the local descent direction $\Hikn$ can be evaluated as follows.
\begin{equation}
	\label{eq:update-h}
	\begin{aligned}
		\Hikn := {} & \beta \Xikn \dkh{(\Xikn)\zz \Xikn - I_p} \\
		& + R_i (\Xikn),
	\end{aligned}
\end{equation}
where $\beta > 0$ is a penalty parameter and
\begin{equation*}
	\begin{aligned}
		R_i (X) := \dfrac{1}{2} G_i (X) \dkh{3 I_p - X\zz X}
		- X \sym \dkh{X\zz G_i (X)}.
	\end{aligned}
\end{equation*}
For more details about the construction of $\Hikn$, 
we refer interested readers to \cite{Wang2022decentralized}. 
Finally, each agent $i$ updates $\Dikn$
based on the following gradient tracking technique.
\begin{equation}
	\label{eq:update-d}
	\Dikn := \sumjjd W(i, j) \Djk 
	+ \Hikn - \Hik.
\end{equation}

We formally present the detailed algorithmic framework as Algorithm \ref{alg:THANOS},
named {\it ``decen\underline{t}ralized smoot\underline{h}ing 
gr\underline{a}dient 
tracki\underline{n}g 
\underline{o}ver
\underline{S}tiefel manifolds''} 
and abbreviated to THANOS.
In principle, one can devise an adaptive strategy 
to update the smoothing parameter
based on the global objective function value, 
such as \cite{Chen2012smoothing,Bian2020smoothing,Liu2022linearly}.
Such information is not available under the decentralized setting.
Therefore, the smoothing parameter is fixed in THANOS.

\IncMargin{0.5em}
\begin{algorithm}[ht!]
	\caption{Decentralized smoothing gradient tracking over Stiefel manifolds (THANOS).} 
	\label{alg:THANOS}
	
	\KwIn{initial guess $X_{\mathrm{initial}} \in \Snp$, stepsize $\eta > 0$, 
		smoothing parameter $\sigma > 0$,
		and penalty parameter $\beta > 0$.}
	
	Set $k := 0$.
	
	For any $\iid$, initialize $\Xik := X_{\mathrm{initial}}$ and $\Dik := \Hik$. 
	
	\While{``not converged''}
	{
	
		\For{all $\iid$ in parallel}
		{
			
			Update $\Xikn$ by \eqref{eq:update-x}.
			
			Compute $\Hikn$ by \eqref{eq:update-h}.
			
			Update $\Dikn$ by \eqref{eq:update-d}.
			
			Set $k := k + 1$.
			
		}
		
	}
	
	\KwOut{$\{\Xik\}$.}
	
\end{algorithm}

\addtolength{\textheight}{-3cm}   

\section{CONVERGENCE ANALYSIS}

This section is devoted to the convergence analysis of THANOS.
Towards this end, we need to impose several mild conditions on $\beta$  and $\eta$,
which are stated below to facilitate the narrative.
%
%
%

\begin{condition}
	\label{cond:parameter}
	(i) The penalty parameter $\beta$ satisfies 
	\begin{equation*}
		\begin{aligned}
			\beta > \max\left\{
				\dfrac{6 + 21 (M_f + L_g)}{5}, \,
				\dfrac{72 (4 + 3 M_g)}{5}, \,
				\dfrac{1}{7dp + 6d}, \,
				\right. \\
				\left.
				22 \dkh{L_f + \dfrac{1}{\sigma}}^2
				\right\},
		\end{aligned}
	\end{equation*}
	where $M_f := \sup_{\iid}\, \{ \norm{\nabla f_i (X)}\ff 
	\mid \norm{X}\ff \leq \sqrt{7dp / 6} + \sqrt{d} \}$
	and $M_g := 3 (M_f + L_g) (7dp + 6d + 3) / 6$
	are two positive constants.
	
	(ii) The stepsize $\eta$ satisfies
	\begin{equation*}
		0 < \eta < \dfrac{d (1 - \lambda^2)}{48 (L_r + (7dp + 6d) \beta)^2},
	\end{equation*}
	where $L_r  := 7dp (L_f + 1 / \sigma) + 6d + 3$ is a positive constant.
\end{condition}

\begin{proposition}
	\label{prop:rate}
	Suppose Assumption \ref{asp:objective} and Assumption \ref{asp:network} hold,
	and $\{\bar{X}^{(k)}\}$  is the average  sequence of local iterates 
	generated by Algorithm \ref{alg:THANOS},
	where $\bar{X}^{(k)} := \sumiid \Xik / d$.
	Let the algorithmic parameters $\beta$ and $\eta$ satisfy Condition \ref{cond:parameter}.
	Then $\{\bar{X}^{(k)}\}$ has at least one accumulation point,
	and any accumulation point $\bar{X}^{\ast}$ 
	is a first-order stationary point of the smoothed problem \eqref{opt:stiefel-s}.
	Moreover, the following relationships hold.
	\begin{equation*}
		\min_{k = 0, 1, \dotsc, K - 1} \norm{R (\bar{X}^{(k)})}\fs
		\leq \dfrac{2 C}{\eta K},
	\end{equation*}
	and
	\begin{equation*}
		\min_{k = 0, 1, \dotsc, K - 1} 
		\norm{(\bar{X}^{(k)})\zz \bar{X}^{(k)} - I_p}\fs
		\leq \dfrac{2 C}{\eta (L_f + 1 / \sigma)^2 K},
	\end{equation*}
	where $R (X) := \sumiid R_i (X)$ 
	and $C > 0$ is a constant independent of $\sigma$.
\end{proposition}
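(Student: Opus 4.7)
The plan is to reduce the convergence analysis of THANOS to that of DESTINY applied to the smoothed problem \eqref{opt:stiefel-s}, and then verify that the parameter choices in Condition \ref{cond:parameter} are calibrated so that the Lipschitz constant $L_f + 1/\sigma$ of the smoothed gradient $G_i = \nabla f_i + \nabla\env_{\sigma,g_i}$ enters the analysis correctly. By Proposition \ref{prop:moreau-smooth}, each $h_i$ is continuously differentiable with $L_{h_i} \leq L_f + 1/\sigma$, so problem \eqref{opt:stiefel-s} fits into the smooth decentralized Stiefel framework. The two bounds in the proposition will then follow from a single sufficient-decrease inequality for a Lyapunov function summed telescopically.

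First, I would introduce the merit function used in the DESTINY analysis, which combines an exact penalty objective with the two tracking/consensus errors:
\begin{equation*}
  \Phi^{(k)} := \sumiid h_i(\bar{X}^{(k)}) + \frac{\beta}{4}\norm{(\bar{X}^{(k)})\zz \bar{X}^{(k)} - I_p}\fs + c_1 \sumiid \norm{\Xik - \bar{X}^{(k)}}\fs + c_2 \sumiid \norm{\Dik - \bar{D}^{(k)}}\fs,
\end{equation*}
with $\bar D^{(k)} := \sumiid \Dik/d$, for suitably chosen $c_1,c_2 > 0$. The lower bound on $\beta$ in Condition \ref{cond:parameter}(i) ensures $\Phi^{(k)}$ is bounded below and that the penalty term dominates the constraint violation, while its dependence on $1/\sigma$ (through $L_f + 1/\sigma$) absorbs the smoothness of the Moreau envelope. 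I would first verify boundedness of $\{\Xik\}$ within the region $\{\norm{X}\ff \leq \sqrt{7dp/6} + \sqrt{d}\}$ used to define $M_f,M_g$, which justifies using a global Lipschitz constant for $\nabla h_i$ along the iterates and bounds $\norm{G_i(\Xik)}\ff$ and $\norm{R_i(\Xik)}\ff$ uniformly by $L_r$.

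Next, I would establish the one-step descent inequality
\begin{equation*}
  \Phi^{(k+1)} \leq \Phi^{(k)} - a_1\eta\norm{R(\bar{X}^{(k)})}\fs - a_2\eta(L_f + 1/\sigma)^2 \norm{(\bar{X}^{(k)})\zz\bar{X}^{(k)} - I_p}\fs
\end{equation*}
for some constants $a_1,a_2 > 0$. This is the core of the argument and the main obstacle. Three coupled error terms must be controlled simultaneously: (a) the descent from the averaged update $\bar X^{(k+1)} = \bar X^{(k)} - \eta \bar D^{(k)}$ along the direction $R(\bar X^{(k)})$ plus the constraint-violation penalty gradient, which requires a standard descent lemma using the Lipschitz gradient constant $L_f + 1/\sigma$; (b) the contraction of the consensus error $\sumiid \norm{\Xik - \bar{X}^{(k)}}\fs$ by the factor $\lambda^2$ from the mixing step, compensated by an $\eta^2$ perturbation from $\Dik$; and (c) the analogous contraction for the tracking error, perturbed by $\norm{\Hikn - \Hik}\ff$, which in turn is controlled by the smoothed gradient's Lipschitz constant times $\norm{\Xikn - \Xik}\ff$. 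Cross terms between consensus error and the gradient of the penalty are absorbed via Young's inequality, which is exactly what forces the thresholds on $\beta$ and the upper bound $\eta < d(1-\lambda^2)/(48(L_r + (7dp+6d)\beta)^2)$ in Condition \ref{cond:parameter}(ii).

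Once the sufficient decrease is in hand, telescoping from $k=0$ to $K-1$ gives
\begin{equation*}
  a_1 \eta \sum_{k=0}^{K-1}\norm{R(\bar{X}^{(k)})}\fs + a_2\eta(L_f + 1/\sigma)^2 \sum_{k=0}^{K-1}\norm{(\bar{X}^{(k)})\zz\bar{X}^{(k)} - I_p}\fs \leq \Phi^{(0)} - \inf_k \Phi^{(k)} \leq C,
\end{equation*}
where $C$ depends only on initial data and the uniform bounds $M_f,M_g,L_r$, all of which are independent of $\sigma$ because the Moreau envelope satisfies $\env_{\sigma,g_i}(X)\leq g_i(X)$ and $\norm{\nabla\env_{\sigma,g_i}(X)}\ff\leq L_g$ (Propositions \ref{prop:moreau-approx} and \ref{prop:moreau-smooth}). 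Bounding the minimum by the average then yields both stated $\cO(1/(\eta K))$ rates. Boundedness of $\{\bar X^{(k)}\}$ gives an accumulation point $\bar X^\ast$ by Bolzano--Weierstrass; the telescoped bound forces $\norm{R(\bar X^{(k)})}\ff \to 0$ and $\norm{(\bar X^{(k)})\zz \bar X^{(k)} - I_p}\ff \to 0$ along a subsequence, and continuity of $R$ together with the identity $\proj_{\bar X^\ast}(G(\bar X^\ast)) = R(\bar X^\ast)$ at feasible points (used in \cite{Wang2022decentralized}) identifies $\bar X^\ast$ as a first-order stationary point of \eqref{opt:stiefel-s}.
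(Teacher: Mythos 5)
Your proposal is correct and, at its core, takes the same route as the paper: the paper's entire proof consists of observing that each $h_i = f_i + \env_{\sigma, g_i}$ is continuously differentiable with $\nabla h_i$ Lipschitz of constant $L_{f_i} + 1/\sigma$ (via Proposition \ref{prop:moreau-smooth}) and then invoking Theorem 10 of \cite{Wang2022decentralized} verbatim. Your opening paragraph is exactly that reduction, and the remainder of your argument is a faithful reconstruction of what the cited theorem's proof contains (the penalty-plus-consensus-plus-tracking merit function, the one-step sufficient decrease calibrated against Condition \ref{cond:parameter}, and the telescoping step), so you supply detail the paper outsources to the citation. One small inaccuracy in your justification of the claim that $C$ is independent of $\sigma$: you list $L_r$ among the ``$\sigma$-independent'' uniform bounds, but $L_r = 7dp\,(L_f + 1/\sigma) + 6d + 3$ depends on $\sigma$ by definition; $L_r$ only enters the stepsize threshold, and the $\sigma$-independence of $C = \Phi^{(0)} - \inf_k \Phi^{(k)}$ should instead be traced to the facts that $\env_{\sigma, g_i}(X) \leq g_i(X) \leq \env_{\sigma, g_i}(X) + \sigma L_{g_i}^2/2$ and $\norm{\nabla \env_{\sigma, g_i}(X)}\ff \leq L_{g_i}$ uniformly in $\sigma$, together with a check that the merit-function weights $c_1, c_2$ can be chosen free of $\sigma$ --- a point you (like the paper) leave implicit.
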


\begin{proof}
	It follows from Proposition \ref{prop:moreau-smooth} that
	the local function $h_i$ in \eqref{opt:stiefel-s} is first-order differentiable.
	Moreover, $\nabla h_i$ is Lipschitz continuous over $\Snp$,
	and the corresponding Lipschitz constant is $L_{f_i} + 1 / \sigma$.
	Then according to Theorem 10 in \cite{Wang2022decentralized},
	we can obtain the assertions of this proposition.
	The proof is completed.
\end{proof}

\begin{theorem}
	Suppose all the conditions in Proposition \ref{prop:rate} hold
	and 
	\begin{equation*}
		0 < \sigma \leq \dfrac{\epsilon}{2 L_g}.
	\end{equation*}
	Then Algorithm \ref{alg:THANOS} will return a first-order $\epsilon$-stationary point 
	of problem \eqref{opt:stiefel} in at most $\cO (\epsilon^{-4})$ iterations.
\end{theorem}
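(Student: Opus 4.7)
The plan is to chain the convergence rate of Algorithm~\ref{alg:THANOS} on the smoothed surrogate \eqref{opt:stiefel-s} from Proposition~\ref{prop:rate} with the stationarity reduction of Lemma~\ref{le:stationary}, then calibrate the smoothing parameter to the tolerance $\epsilon$ and count iterations.

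I would begin by fixing $\sigma = \epsilon/(2 L_g)$, which saturates the hypothesis of the theorem and, by Lemma~\ref{le:stationary}, lifts any smoothed $\epsilon$-stationary iterate to an $\epsilon$-stationary iterate of the original problem \eqref{opt:stiefel} with witnesses $Y_i = \prox_{\sigma, g_i}(\bar X^{(k)})$. It therefore suffices to count iterations until $\|\proj_{\bar X^{(k)}} G(\bar X^{(k)})\|\ff \leq \epsilon$ and $\|(\bar X^{(k)})\zz \bar X^{(k)} - I_p\|\ff \leq \epsilon$ both hold.

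For that count I would invoke Proposition~\ref{prop:rate}: within the first $K$ iterations there exists an index $k$ with $\|R(\bar X^{(k)})\|\fs \leq 2C/(\eta K)$ and $\|(\bar X^{(k)})\zz \bar X^{(k)} - I_p\|\fs \leq 2C/(\eta (L_f + 1/\sigma)^2 K)$. A direct computation gives $R_i(X) = \proj_X G_i(X) + \tfrac{1}{2} G_i(X)(I_p - X\zz X)$, so on $\Snp$ the two coincide and off the manifold they differ by an order $\|I_p - X\zz X\|\ff$ term; hence once both right-hand sides above are at most $\epsilon^2$ (up to absolute constants that can be absorbed into the final choice of $\sigma$ and $\beta$), the smoothed $\epsilon$-stationarity conditions are met. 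The binding requirement is $K \geq 2C/(\eta \epsilon^2)$, since the additional factor $(L_f + 1/\sigma)^{-2} = \Theta(\epsilon^2)$ already relaxes the feasibility bound.

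The main obstacle is then tracking how the admissible stepsize $\eta$ degrades as $\sigma = \Theta(\epsilon)$ shrinks. From Condition~\ref{cond:parameter}, the dominant lower bound $22(L_f + 1/\sigma)^2 = \Theta(\sigma^{-2})$ forces $\beta = \Theta(\epsilon^{-2})$, and then $(7dp + 6d)\beta$ dominates $L_r$ in the stepsize denominator, so the admissible $\eta$ sits at order $\Theta(\beta^{-2})$. Substituting these scalings into $K \geq 2C/(\eta \epsilon^2)$ yields the claimed $\cO(\epsilon^{-4})$ iteration bound, after which Lemma~\ref{le:stationary} finishes the proof by translating the resulting $\epsilon$-stationary point of the smoothed problem into one for the original problem \eqref{opt:stiefel}.
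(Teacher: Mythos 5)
Your overall route is the same as the paper's: invoke Proposition \ref{prop:rate} for the rate on the smoothed problem, convert the bound on $\norm{R(\bar X^{(k)})}\ff$ into a bound on $\norm{\proj_{\bar X^{(k)}}(G(\bar X^{(k)}))}\ff$ via the identity $R_i(X) = \proj_X(G_i(X)) + \tfrac12 G_i(X)(I_p - X\zz X)$ (the paper uses exactly this relation, bounding $\norm{G}\ff$ by $M_f + L_g$), require both residuals to be at most $\epsilon^2$, and finish with Lemma \ref{le:stationary}. That part is fine.

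The gap is in your final iteration count, and it is a concrete arithmetic failure rather than a stylistic one. You correctly observe that Condition \ref{cond:parameter} forces $\beta = \Theta((L_f + 1/\sigma)^2) = \Theta(\epsilon^{-2})$ and hence $\eta = \Theta(\beta^{-2}) = \Theta(\epsilon^{4})$. But substituting $\eta = \Theta(\epsilon^4)$ into your binding requirement $K \gtrsim C/(\eta\epsilon^2)$ gives $K = \Theta(\epsilon^{-6})$, not the $\cO(\epsilon^{-4})$ you assert in the last sentence; the claimed conclusion does not follow from the scalings you yourself derived. The paper's proof avoids this trap by never substituting the $\epsilon$-dependence of $\eta$ and $\beta$: it leaves $\eta$ symbolic in the bound
\begin{equation*}
K \geq \max\hkh{\dfrac{4(L_f + 1/\sigma)^2 C + (M_f + L_g)^2 C}{\eta (L_f + 1/\sigma)^2 \epsilon^2},\ \dfrac{2C}{\eta (L_f + 1/\sigma)^2 \epsilon^2}}
\end{equation*}
and reads off $\cO(\epsilon^{-4})$ with $\eta$ treated as part of the hidden constant (the dominant term being $4C/(\eta\epsilon^2)$ combined with $\eta \propto (L_f+1/\sigma)^{-2} = \Theta(\epsilon^{2})$). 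Your more honest accounting of how Condition \ref{cond:parameter} couples $\eta$ to $\sigma$ is commendable, but as written it contradicts the rate you are trying to prove; you must either drop the substitution and argue at the paper's level of granularity (stepsize scaling as $\Theta((L_f+1/\sigma)^{-2})$), or accept that the exponent your chain of inequalities actually delivers is worse than $4$. As it stands, the last step of your proof is not valid.
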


\begin{proof}
	By straightforward calculations, we have
	\begin{equation*}
		\begin{aligned}
			& \norm{\proj_{\bar{X}^{(k)}}
			\dkh{G (\bar{X}^{(k)})}}\ff \\
			& \leq \norm{R (\bar{X}^{(k)})}\ff 
			+ \dfrac{1}{2} \norm{G (\bar{X}^{(k)})}\ff
			\norm{(\bar{X}^{(k)})\zz \bar{X}^{(k)} - I_p}\ff \\
			& \leq \norm{R (\bar{X}^{(k)})}\ff 
			+ \dfrac{1}{2} (M_f + L_g)
			\norm{(\bar{X}^{(k)})\zz \bar{X}^{(k)} - I_p}\ff.
		\end{aligned}
	\end{equation*}
	Then it can be readily verified that
	\begin{equation*}
		\begin{aligned}
			& \norm{\proj_{\bar{X}^{(k)}}
				\dkh{G (\bar{X}^{(k)})}}\fs \\
			& \leq 2 \norm{R (\bar{X}^{(k)})}\fs 
			+ \dfrac{1}{2} (M_f + L_g)^2
			\norm{(\bar{X}^{(k)})\zz \bar{X}^{(k)} - I_p}\fs,
		\end{aligned}
	\end{equation*}
	which implies that
	\begin{equation*}
		\begin{aligned}
			& \min_{k = 0, 1, \dotsc, K - 1} 
			\norm{\proj_{\bar{X}^{(k)}}
				\dkh{G (\bar{X}^{(k)})}}\fs \\
			& \leq \dfrac{4 (L_f + 1 / \sigma)^2 C + (M_f + L_g)^2 C}{\eta (L_f + 1 / \sigma)^2 K}.
		\end{aligned}
	\end{equation*}
	According to Lemma \ref{le:stationary}, 
	Algorithm \ref{alg:THANOS} is guaranteed to 
	find a first-order $\epsilon$-stationary point if
	\begin{equation*}
		\left\{
		\begin{aligned}
			& \dfrac{4 (L_f + 1 / \sigma)^2 C + (M_f + L_g)^2 C}{\eta (L_f + 1 / \sigma)^2 K} 
			\leq \epsilon^2, \\
			& \dfrac{2 C}{\eta (L_f + 1 / \sigma)^2 K}
			\leq \epsilon^2,
		\end{aligned}
		\right.
	\end{equation*}
	namely,
	\begin{equation*}
		\begin{aligned}
		K \geq \max & \left\{
		\dfrac{4 (L_f + 1 / \sigma)^2 C + (M_f + L_g)^2 C}{\eta (L_f + 1 / \sigma)^2 \epsilon^2},
		\right. \\
	 	& \quad \left.
		\dfrac{2 C}{\eta (L_f + 1 / \sigma)^2 \epsilon^2}
		\right\}
		= \cO \dkh{\dfrac{1}{\epsilon^4}}.
		\end{aligned}
	\end{equation*}
	The proof is completed.
\end{proof}

\section{NUMERICAL EXPERIMENTS}

Comprehensive numerical experiments
are conducted in this section
to evaluate the numerical performance of THANOS.
We use the $\mathbf{Python}$ language
to implement the tested algorithms
with the communication realized via the package $\mathbf{mpi4py}$.
And the corresponding experiments are performed on a workstation
with two Intel Xeon Gold 6242R CPU processors (at $3.10$GHz$\times 20 \times 2$) 
and 510GB of RAM under Ubuntu 20.04.

\subsection{Test Problem}

In the numerical experiments, we test the performance of THANOS 
on the following sparse PCA problems.
\begin{equation}
	\label{opt:spca}
	\begin{aligned}
		\min\limits_{X \in \Snp} \hspace{2mm} 
		& -\frac{1}{2} \sumiid \tr \dkh{ X\zz A_i A_i\zz X} + \mu r (X), \\
	\end{aligned}
\end{equation}
where $A_i \in \bR^{n \times m_i}$ is the local data matrix 
privately owned by agent $i \in [d]$
that consists of $m_i$ samples with $n$ features,
the non-smooth regularizer $r (X)$ is imposed to 
promote specific sparsity structures in $X$,
and $\mu > 0$ is the parameter used to control the amount of sparseness.
We use $A = [A_1 \; A_2 \; \dotsb \; A_d] \in \Rnm$ to denote the global data matrix
such that each agent possesses a subset of samples,
where $m = m_1 + m_2 + \dotsb + m_d$.
This is a natural setting under the distributed circumstance \cite{Wang2020seeking}.
One can readily verify that \eqref{opt:spca} is a special case of \eqref{opt:stiefel}
by identifying $f_i (X) = - \tr (X\zz A_i A_i\zz X) / 2$
and $g_i (X) = \mu r (X) / d$
for any $\iid$.

We consider two different regularizers.
The first one is $\ell_l$-norm regularizer
\cite{Jolliffe2003}:
\begin{equation}
	\label{eq:l1}
	r (X) = \norm{X}_1 
	:= \sum_{i = 1}^n \sum_{j = 1}^p \abs{X(i, j)}.
\end{equation}
The second one is $\ell_{2,1}$-norm regularizer
\cite{Xiao2021exact}:
\begin{equation}
	\label{eq:l21}
	r (X) = \norm{X}_{2, 1} 
	:= \sum_{i = 1}^n \norm{X(i, \cdot)}_2,
\end{equation}
where $X(i, \cdot)$ denotes the $i$-th row of $X$.


\subsection{Numerical Results}


In the following experiments, 
we randomly generate the test matrix $A$
with $n = 10$ and $m = 320$.
The columns of $A$ are uniformly distributed into $d = 32$ agents.
Other parameters in problem \eqref{opt:spca}
as set as $p = 3$ and $\mu = 0.1$.
We construct an Erdos-Renyi network, 
where two agents are connected with a fixed probability $0.5$.
This network is associated with the Metropolis constant matrix \cite{Shi2015} 
as the mixing matrix $W$.

After the construction of $A$,
we employ the SLPG \cite{Xiao2021penalty} algorithm
to generate a high-precision solution $X^{\ast} \in \Snp$ to problem \eqref{opt:spca}
under the centralized environment.
Then we test the performance of THANOS on problem \eqref{opt:spca} 
for different values of smoothing parameter $\sigma$
with fixed penalty parameter $\beta = 1$.
We use the BB stepsize proposed in \cite{Wang2022decentralized} 
to accelerate the convergence.
The initial point $X_i^{(0)}$ is constructed from the leading $p$ left singular vectors of $A$,
which can be computed efficiently by DESTINY \cite{Wang2022decentralized} 
under the decentralized setting.

In each iteration of THANOS, 
we compute and record the error term defined by
\begin{equation*}
	\dist^{(k)} := \dfrac{1}{d} \sumiid \norm{\Xik - X^{\ast}}\ff,
\end{equation*}
and the feasibility violation defined by
\begin{equation*}
	\mathrm{feas}^{(k)} := \dfrac{1}{d} \sumiid \norm{(\Xik)\zz \Xik - I_p}\ff,
\end{equation*}
as the performance measurements.

\begin{figure}[t!]
	\centering
	
	\subfigure[$\dist^{(k)}$]{
		\label{subfig:dist_l1}
		\includegraphics[width=0.8\linewidth]{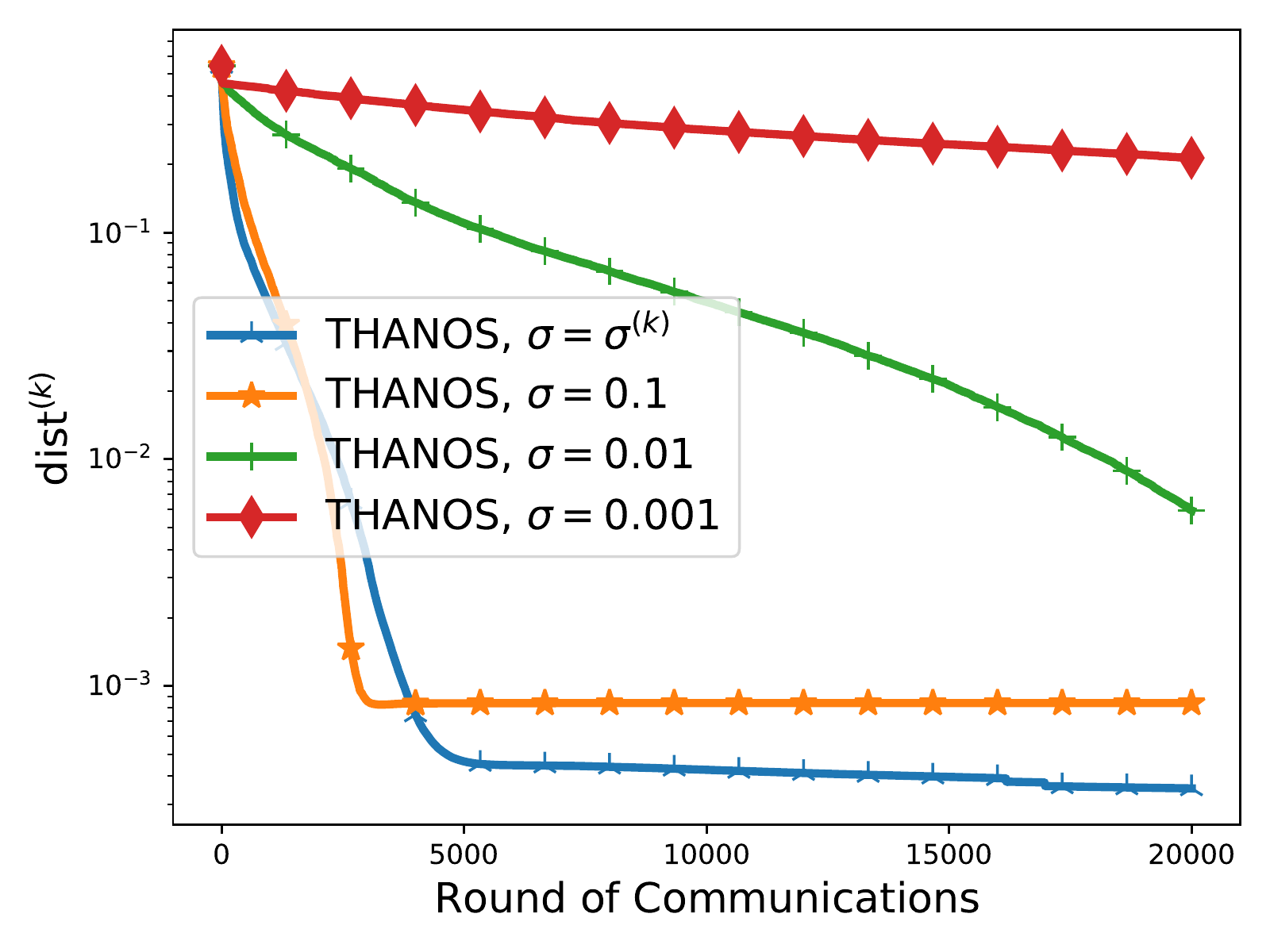}
	}
	
	\subfigure[$\mathrm{feas}^{(k)}$]{
		\label{subfig:feas_l1}
		\includegraphics[width=0.8\linewidth]{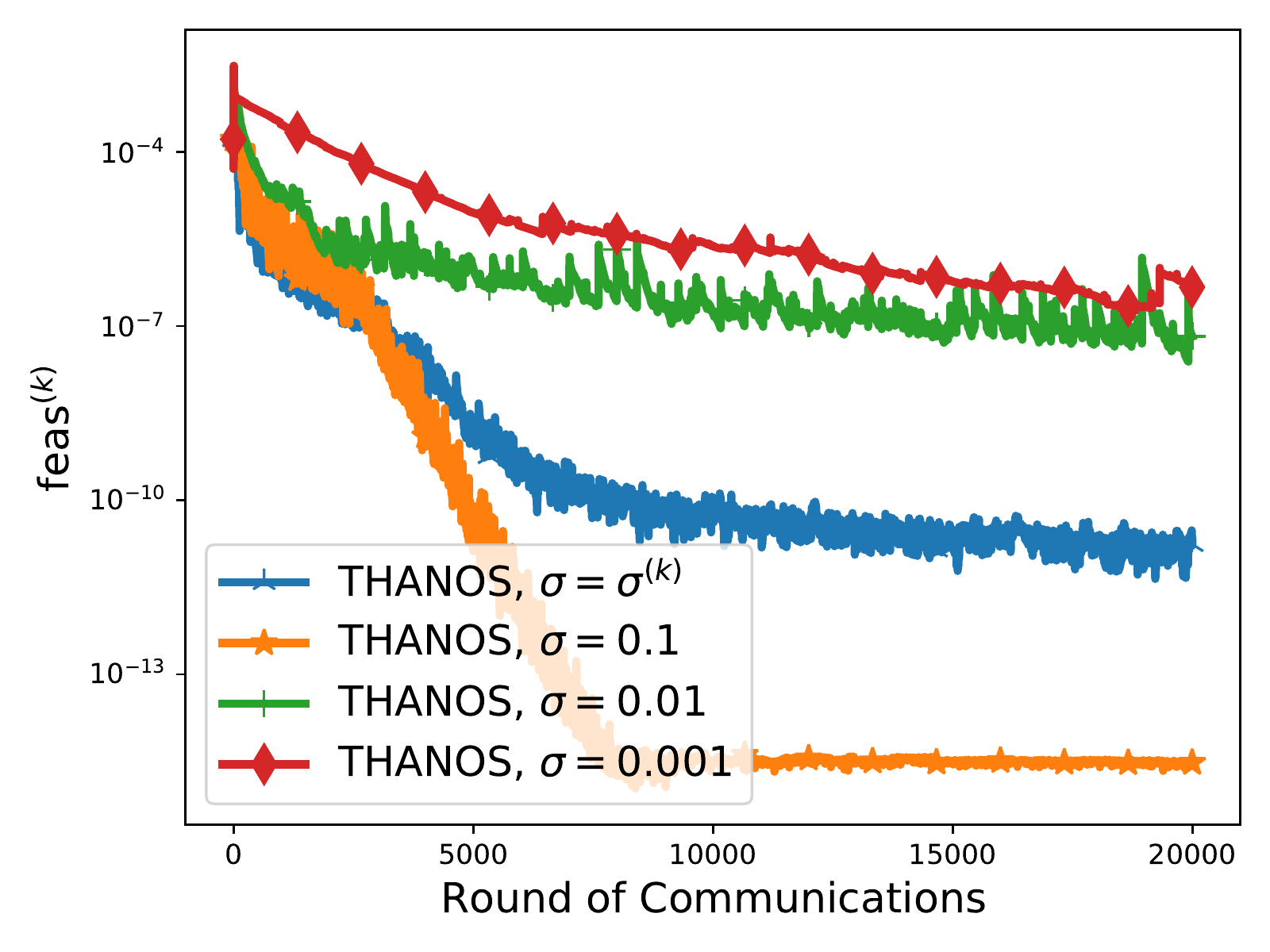}
	}
	
	\caption{Numerical performance of THANOS 
		for different values of $\sigma$
		on sparse PCA problems
		with $r (X) = \norm{X}_1$.}
	\label{fig:SPCA_L1}
\end{figure}

\begin{figure}[t!]
	\centering
	
	\subfigure[$\dist^{(k)}$]{
		\label{subfig:dist_l21}
		\includegraphics[width=0.8\linewidth]{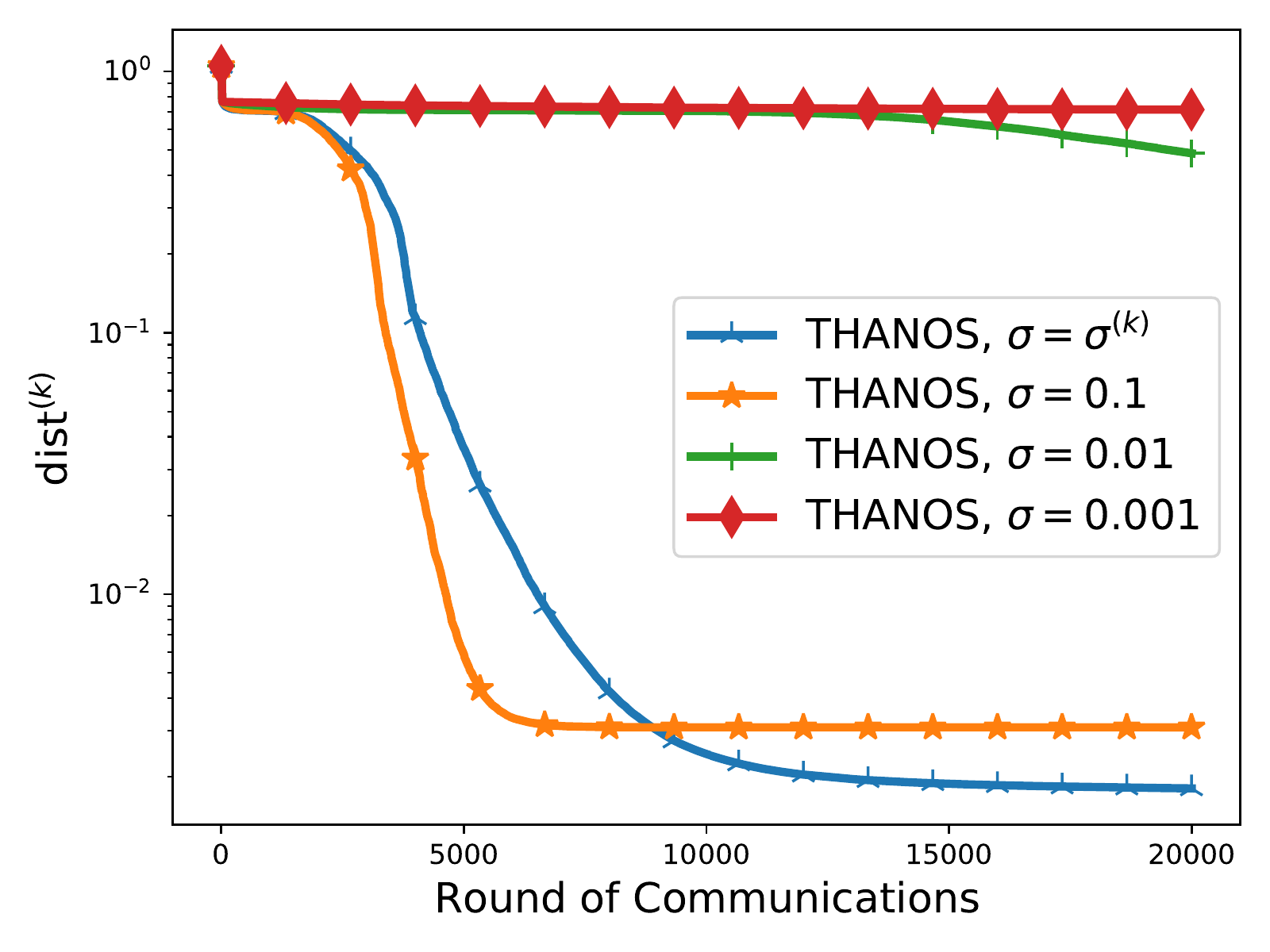}
	}
	
	\subfigure[$\mathrm{feas}^{(k)}$]{
		\label{subfig:feas_l21}
		\includegraphics[width=0.8\linewidth]{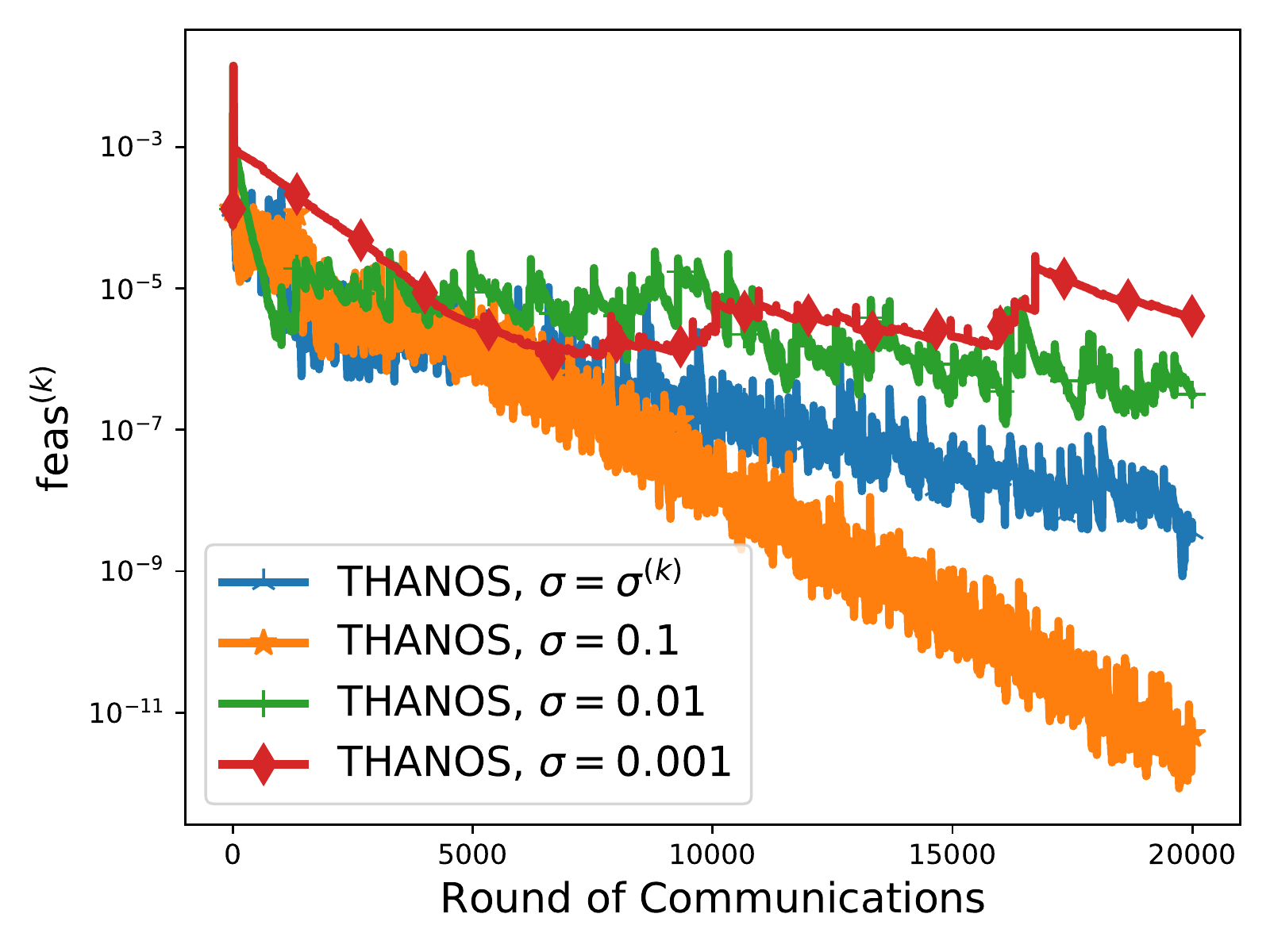}
	}
	
	\caption{Numerical performance of THANOS 
		for different values of $\sigma$
		on sparse PCA problems
		with $r (X) = \norm{X}_{2, 1}$.}
	\label{fig:SPCA_L21}
\end{figure}

Figure \ref{fig:SPCA_L1} and Figure \ref{fig:SPCA_L21} 
depict the numerical performance of THANOS
for two regularizers \eqref{eq:l1} and \eqref{eq:l21}, respectively.
In both figures, we plot $\dist^{(k)}$ and $\mathrm{feas}^{(k)}$
against the iteration count $k$ 
corresponding to different values of $\sigma$,
which are distinguished by colors.
We can observe that, the smaller the value of $\sigma$ is, 
the worse the performance of THANOS becomes.
The reason is that the smoothed problem \eqref{opt:stiefel-s} is ill-conditioned 
for small values of $\sigma$.
Moreover, increasing the value of $\sigma$ will give rise to large approximation errors. 
In order to remedy this dilemma,
we propose an updating scheme 
that gradually reduces the smoothing parameter, 
that is,
\begin{equation*}
	\sigma^{(k)} = k^{-1/3},
\end{equation*}
where $\sigma^{(k)}$ is the smoothing parameter at iteration $k$.
The above updating scheme has a favorable numerical performance in practice,
which is also shown in Figure~\ref{fig:SPCA_L1} and Figure~\ref{fig:SPCA_L21}.

\section{CONCLUSIONS}

This paper considers a class of decentralized optimization problems 
over the Stiefel manifold with non-smooth regularizers.
There is currently no algorithm in the literature
that is capable of solving this problem.
To overcome the difficulty of non-smoothness,
we use the Moreau envelope to approximate the non-smooth regularizers 
in the objective function.
Then we apply an existing algorithm to solve the obtained smooth proxy of the original problem.
The resulting algorithm is called THANOS.
We prove that THANOS will return a first-order $\epsilon$-stationary point
in at most $\cO (\epsilon^{-4})$ iterations.
Preliminary numerical results illustrate that THANOS is of great potential.



\bibliographystyle{ieeetr}

\bibliography{library}

\addcontentsline{toc}{section}{References}

\end{document}